\documentclass{amsart}
\usepackage{amssymb,amsmath,latexsym}
\usepackage{amsthm}
\usepackage{fontenc}
\usepackage{amssymb}
\DeclareMathOperator{\Res}{Res}

\numberwithin{equation}{section}

\newtheorem{theorem}{Theorem}[section]


\setlength{\parindent}{0in} 
\begin{document}
\author{Alexander E Patkowski}
\title{On some mellin transforms for the Riemann zeta function in the critical strip}

\maketitle
\begin{abstract}We offer two new Mellin transform evaluations for the Riemann zeta function in the region $0<\Re(s)<1.$ Some discussion is offered in the way of evaluating some further Fourier integrals involving the Riemann xi function.\end{abstract}

\keywords{\it Keywords: \rm Mellin transform; Riemann zeta function; Digamma function.}

\subjclass{ \it 2010 Mathematics Subject Classification 11M06, 33C15.}

\section{Introduction and Main results} The Riemann zeta function, given by the series $$\sum_{n\ge1}n^{-s},$$ and convergent when $\Re(s)>1,$ is of great importance in the theory of numbers. Particularly important is its properties in the critical strip $0<\Re(s)<1,$ as the Riemann hypothesis says that all the nontrivial zeros are within this strip. Many integral evaluations in this strip are known, some of which have shed some light on nontrivial zeros of $\zeta(s)$ [8, 11]. One integral relevant to our study is sometimes attributed to Kloosterman [5], [11, pg.34]
\begin{equation} \int_{0}^{\infty}t^{s-1}(\psi^{(0)}(t+1)-\log(t))dt=-\frac{\pi\zeta(1-s)}{\sin(\pi s)},   \end{equation}
valid when $0<\Re(s)<1.$ Here $\psi^{(n)}(x)=\frac{\partial^{n+1} }{\partial x^{n+1}}(\log(\Gamma(x)),$ where $\Gamma(x)$ is the classical gamma function [1]. This classical result has also been used by Whittaker and Watson (see [11, pg.34]) to investigate properties of $\log\Gamma(x)$ and appears in the recent work of Dixit et al. [2, 5], which we shall relate to in the following section. The proof of this result in Titshmarch [11, pg. 29] involves application of the M$\ddot{u}$ntz formula and integration by parts. We adapted an alternative proof of (1.1) using the calculus of residues to obtain two integral formulae that appear to be new. \par Note the Stieltjes constants are given by [1]
$$\gamma_n=\lim_{r\rightarrow\infty}\left(\sum_{k\ge1}^{r}\frac{(\log(k))^n}{k}-\frac{(\log(r))^{n+1}}{n+1}\right).$$

\begin{theorem} Suppose that $0<\Re(s)<1.$ Define the function $\Lambda_1(x)$ for $x>0$ to be
$$\Lambda_1(x):=x\sum_{n\ge1}\frac{\log(x/n)}{n(x-n)}-\frac{1}{2}\left(\log^2(x)-2\gamma\log(x)-2\gamma_1+\frac{\pi^2}{3}\right),$$
then \begin{equation}\int_{0}^{\infty}t^{s-1}\Lambda_1(t)dt=\frac{\pi^2\zeta(1-s)}{\sin^2(\pi s)}.\end{equation}

Further, define the function $\Lambda_2(x)$ for $x>0$ to be
$$\Lambda_2(x):=x\sum_{n\ge1}\frac{\pi^2+(\log(x/n))^2}{n(x+n)}$$
$$-[-\gamma_2+2\gamma_1\log(x)-\frac{\log^3(x)}{3}-\gamma(\pi^2+\log^2(x))-\pi^2\log(x)],$$
then \begin{equation}\int_{0}^{\infty}t^{s-1}\Lambda_2(t)dt=\frac{2\pi^3\zeta(1-s)}{\sin^3(\pi s)}.\end{equation}

\end{theorem}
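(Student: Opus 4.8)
The plan is to derive both evaluations from the Kloosterman identity (1.1) by means of a few elementary residue integrals. Starting from
$$\int_{0}^{\infty}\frac{t^{s-1}}{t+u}\,dt=\frac{\pi\,u^{s-1}}{\sin(\pi s)}\qquad(u>0,\ 0<\Re(s)<1),$$
obtained from a keyhole contour around the positive real axis, the substitution $t=uv$ followed by differentiation in $s$ under the integral sign (and an elementary rearrangement) produces the companion evaluations
$$\int_{0}^{\infty}\frac{t^{s-1}\log(t/u)}{t-u}\,dt=\frac{\pi^{2}u^{s-1}}{\sin^{2}(\pi s)},\qquad \int_{0}^{\infty}\frac{t^{s-1}\bigl(\pi^{2}+\log^{2}(t/u)\bigr)}{t+u}\,dt=\frac{2\pi^{3}u^{s-1}}{\sin^{3}(\pi s)},$$
the last resting on the identity $\sin^{2}(\pi s)+1+\cos^{2}(\pi s)=2$. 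These already explain both the shape of the summands $\log(x/n)/(n(x-n))$ and $(\pi^{2}+\log^{2}(x/n))/(n(x+n))$ and the constant $2$ appearing in (1.3).

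Put $f(u):=\psi^{(0)}(u+1)-\log(u)$, so that (1.1) reads $\int_{0}^{\infty}u^{s-1}f(u)\,du=-\pi\zeta(1-s)/\sin(\pi s)$ throughout the strip. The crux is the pair of integral representations
$$\Lambda_{1}(t)=-\int_{0}^{\infty}\frac{f(u)}{t+u}\,du,\qquad \Lambda_{2}(t)=2\int_{0}^{\infty}\frac{\Lambda_{1}(u)}{t+u}\,du\qquad(t>0).$$
Granting these, the theorem follows immediately. Since $f(u)=O(u^{-1})$ as $u\to\infty$ and $f(u)=O(1+|\log u|)$ as $u\to0^{+}$, one has $\int_{0}^{\infty}u^{\Re(s)-1}|f(u)|\,du<\infty$ and hence $\int_{0}^{\infty}\!\int_{0}^{\infty}|f(u)|\,t^{\Re(s)-1}(t+u)^{-1}\,dt\,du<\infty$, so Fubini together with the first residue integral gives
$$\int_{0}^{\infty}t^{s-1}\Lambda_{1}(t)\,dt=-\frac{\pi}{\sin(\pi s)}\int_{0}^{\infty}u^{s-1}f(u)\,du=\frac{\pi^{2}\zeta(1-s)}{\sin^{2}(\pi s)},$$
which is (1.2); the identical step applied to the second representation — its absolute-convergence hypothesis checked in the same way, now from $\Lambda_{1}(u)=O(u^{-1}\log u)$ at infinity and $O(\log^{2}u)$ near the origin — yields $\int_{0}^{\infty}t^{s-1}\Lambda_{2}(t)\,dt=\frac{2\pi}{\sin(\pi s)}\cdot\frac{\pi^{2}\zeta(1-s)}{\sin^{2}(\pi s)}=\frac{2\pi^{3}\zeta(1-s)}{\sin^{3}(\pi s)}$, which is (1.3).

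The step I expect to be the main obstacle is the proof of these two representations, where the M\"untz-type subtlety behind (1.1) itself reappears. Inserting either the Mittag-Leffler expansion $f(u)=-\gamma-\log u+\sum_{n\ge1}u/(n(n+u))$ or the Gauss-type integral $f(u)=\int_{0}^{\infty}e^{-uv}\bigl(v^{-1}-(e^{v}-1)^{-1}\bigr)\,dv$ into $-\int_{0}^{\infty}f(u)/(t+u)\,du$ and interchanging the summation (or the inner integration) with the $u$-integral produces logarithmically divergent pieces whose cancellation leaves exactly the explicit formula for $\Lambda_{1}$; the Stieltjes constants $\gamma,\gamma_{1}$ (and $\gamma_{2}$ for $\Lambda_{2}$) and the polynomials in $\log t$ subtracted in the definitions are precisely the surviving constant terms, traceable to the asymptotics of $\sum_{n\le N}(\log n)^{j}/n$ as $N\to\infty$. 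In practice I would truncate at $n\le N$, evaluate each finite integral by the residue building blocks above, let $N\to\infty$, and match the constants that emerge against the definitions of the $\gamma_{j}$; that bookkeeping is the real work. An alternative route that sidesteps the intermediate representations is to feed the explicit series straight into $\int_{0}^{\infty}t^{s-1}\Lambda_{j}(t)\,dt$ after splitting the $t$-integral at $t=1$, so that each half can be expanded term by term in its own half-plane and the outcome recombined by analytic continuation in $s$; either way the essential ingredient is the same cancellation.
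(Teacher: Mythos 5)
Your reduction is attractive and the easy half of it is sound: granted the representations $\Lambda_1(t)=-\int_0^\infty f(u)(t+u)^{-1}\,du$ and $\Lambda_2(t)=2\int_0^\infty\Lambda_1(u)(t+u)^{-1}\,du$, the beta integral $\int_0^\infty t^{s-1}(t+u)^{-1}dt=\pi u^{s-1}/\sin(\pi s)$, Fubini and (1.1) do give (1.2) and (1.3), and your convergence checks are fine. The genuine gap is that these two representations are never proved: you yourself flag the truncation/cancellation bookkeeping as ``the real work'' and leave it as a sketch. That step is not a formality; it is where the entire content of the theorem lives, namely the emergence of the precise combination of $\gamma,\gamma_1,\gamma_2$ and $\pi^2$ terms subtracted in the definitions of $\Lambda_1,\Lambda_2$. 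Indeed, if you carry out your own recipe (insert $f(u)=-\gamma-\log u+\sum_{n\ge1}u/(n(n+u))$, integrate against $(t+u)^{-1}$, cancel the divergent pieces), the block you are forced to subtract from $t\sum_{n\ge1}\log(t/n)/(n(t-n))$ comes out as $\tfrac12\bigl(\log^2 t+2\gamma\log t-2\gamma_1+\tfrac{2\pi^2}{3}\bigr)$, which does not agree term-for-term with the display in the statement; and the paper's own closing remark records the representation $\int_0^\infty(\psi(t+1)-\log t)(x+t)^{-1}dt$ \emph{without} your minus sign, which shows how delicate this matching is. So the key lemma cannot be waved through: as submitted, the argument is a plan whose unproved step is essentially equivalent in difficulty to the theorem itself.

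For comparison, the paper does the hard part by a contour shift rather than by real-variable cancellation: for $-1<\Re(s)<0$ one has $\zeta(1-s)=\sum_{n\ge1}n^{s-1}$, and $\pi^2\csc^2(\pi s)$ is the Mellin transform of $t\log t/(t-1)$ in that shifted strip, so term-by-term inversion gives $x\sum_{n\ge1}\log(x/n)/(n(x-n))=\frac{1}{2\pi i}\int_{(c)}\pi^2\csc^2(\pi s)\zeta(1-s)x^{-s}\,ds$; moving the line into $0<\Re(s)<1$ crosses a pole of order three (order four for the $\csc^3$ case), the subtracted log-polynomial is exactly that residue, and Mellin inversion in the critical strip then yields (1.2) and (1.3). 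Your route, once the representations are actually established --- for instance by showing both sides have the same Mellin transform in the strip, which is in effect the paper's computation, or by honestly carrying out the truncation-and-cancellation argument and fixing the constants --- would be a legitimate alternative; at present the central identity is assumed rather than proved.
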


\begin{proof} We work with two known Mellin transforms [6] valid in the strip $0<\Re(s)<1$

\begin{equation} \int_{0}^{\infty}t^{s-1} \frac{\log(t)}{t-1}dt=\frac{\pi^2}{\sin^2(\pi s)},\end{equation}
\begin{equation} \int_{0}^{\infty}t^{s-1} \frac{\pi^2+\log(t)^2}{t+1}dt=\frac{2\pi^3}{\sin^3(\pi s)}.\end{equation}

For (1.2), we first note that for $-1<\Re(s)=c<0$
\begin{equation} x\sum_{n\ge1}\frac{\log(x/n)}{n(x-n)}=\frac{1}{2\pi i}\int_{(c)}\frac{\pi^2\zeta(1-s)}{\sin^2(\pi s)}x^{-s}ds\end{equation}
We now replace $s$ with $1-s$ and note that $\zeta(s)\Gamma^2(s)\Gamma^2(1-s)$ has a pole of order three at $s=1,$ and thereby move the line of integral from the region $1<\Re(s)<2$ to $0<\Re(s)<1.$ We compute that
$$\Res|_{s=1}(\pi^2\csc^2(\pi s)\zeta(s)x^s)=\frac{x}{2}[(\log(x)^2-2\gamma\log(x)-2\gamma_1+\frac{\pi^2}{3}],$$
where we applied the known formula $\Gamma'(1)=-\gamma.$ After this computation, we again replace $s$ by $1-s$ in the contour integral to find that we have the inverse relation of (1.2). \par In the case of (1.3), we use the same approach but compute the pole of order four at $s=1$ of $\zeta(s)\Gamma^3(s)\Gamma^3(1-s).$ This is 
$$\Res|_{s=1}(\pi^3\csc^3(\pi s)\zeta(s)x^s)=\frac{x}{2}[-\gamma_2+2\gamma_1\log(x)-\frac{\log^3(x)}{3}-\gamma(\pi^2+\log^2(x))-\pi^2\log(x)].$$ 
\end{proof}

If we take into consideration the double poles at strictly negative integers $s=-n<0,$ $n\in\mathbb{N},$ we find that moving the integral on the right side of (1.6) to the left gives the interesting series expansion for $|x|<1$
$$x\sum_{n\ge1}\frac{\log(x/n)}{n(x-n)}=-\sum_{n\ge1}\left(\log(x)\zeta(1+n)+\zeta'(1+n)\right)x^n,$$
and similarly,
$$x\sum_{n\ge1}\frac{\pi^2+(\log(x/n))^2}{n(x+n)}=\sum_{n\ge1}\left(\zeta''(1+n)+2\log(x)\zeta'(1+n)+\pi^2\zeta(1+n)+\zeta(1+n)\log^2(x)\right)(-x)^n.$$ Standard manipulations also show that $\Lambda_1(x)$ has the form as the integral
$$\int_{0}^{\infty}\frac{\psi(t+1)-\log(t)}{x+t}dt.$$

\section{Applications to other evaluations}
We shall offer some applications to evaluating Riemann xi function integrals which have been studied by many other others [2, 3, 4, 5, 7, 9, 10]. As usual, we write $\Xi(t):=\xi(\frac{1}{2}+it),$ where [8] $\xi(s):=\frac{1}{2}s(s-1)\pi^{-\frac{s}{2}}\Gamma(\frac{s}{2})\zeta(s).$ Hardy and Koshlyakov [7, 9] give 
\begin{equation} \int_{0}^{\infty}\frac{\Xi(t)}{\frac{1}{4}+t^2}\frac{\cos(xt)}{\cosh(\pi t)}dt=e^{x/2}\int_{0}^{\infty}(\psi(t+1)-\log(t))e^{-\pi t^2e^{2x}}dt,\end{equation}
by using a method of converting a Fourier cosine transform in to a Mellin transform as outlined in [11]. We can apply the same approach to our integrals, since they are similar in nature to (1.1), which can be used to prove (2.1). 
\begin{theorem} We have,
\begin{equation} \int_{0}^{\infty}\frac{\Xi(t)}{\frac{1}{4}+t^2}\frac{\cos(xt)}{\cosh^2(\pi t)}dt=e^{x/2}\int_{0}^{\infty}\Lambda_1(t)e^{-\pi t^2e^{2x}}dt,\end{equation}
and
\begin{equation} 2\int_{0}^{\infty}\frac{\Xi(t)}{\frac{1}{4}+t^2}\frac{\cos(xt)}{\cosh^3(\pi t)}dt=e^{x/2}\int_{0}^{\infty}\Lambda_2(t)e^{-\pi t^2e^{2x}}dt.\end{equation}

\end{theorem}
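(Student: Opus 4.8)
The plan is to imitate the passage from (1.1) to (2.1): insert a Gaussian kernel into the Mellin transforms (1.2) and (1.3), interchange integrations, and move the resulting contour integral to the critical line. I will describe the argument for (2.2); the proof of (2.3) is obtained by replacing $\sin^{2}(\pi s)$ with $\sin^{3}(\pi s)$ throughout and starting from (1.3). First, since the right-hand side of (1.2) is analytic in the strip $0<\Re(s)<1$, Mellin inversion holds with abscissa $\tfrac12$:
\[
\Lambda_1(t)=\frac{1}{2\pi i}\int_{(1/2)}\frac{\pi^{2}\zeta(1-s)}{\sin^{2}(\pi s)}\,t^{-s}\,ds,\qquad t>0 .
\]
Multiplying by $e^{-\pi t^{2}e^{2x}}$, integrating over $t\in(0,\infty)$ and interchanging the two integrations (justified below) reduces the right-hand side to the elementary Gaussian Mellin integral
\[
\int_{0}^{\infty}t^{-s}e^{-\pi a t^{2}}\,dt=\tfrac12(\pi a)^{(s-1)/2}\,\Gamma\!\Big(\tfrac{1-s}{2}\Big),\qquad a=e^{2x},
\]
obtained by the substitution $u=\pi a t^{2}$ and valid for $\Re(s)<1$, hence on $\Re(s)=\tfrac12$.

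Next I would bring in the functional equation. Writing $\xi(s)=\tfrac12 s(s-1)\pi^{-s/2}\Gamma(s/2)\zeta(s)$ together with $\xi(s)=\xi(1-s)$ in the form
\[
\pi^{(s-1)/2}\,\Gamma\!\Big(\tfrac{1-s}{2}\Big)\zeta(1-s)=\frac{2\,\xi(1-s)}{s(s-1)}=\frac{2\,\xi(s)}{s(s-1)},
\]
the integral becomes
\[
\int_{0}^{\infty}\Lambda_1(t)e^{-\pi t^{2}e^{2x}}\,dt=\frac{1}{2\pi i}\int_{(1/2)}\frac{\pi^{2}\,\xi(s)}{\sin^{2}(\pi s)\,s(s-1)}\,e^{x(s-1)}\,ds ,
\]
and no contour shift is needed, since $\xi$ is entire and the remaining factor is analytic on $0<\Re(s)<1$. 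Now set $s=\tfrac12+it$: then $\sin(\pi s)=\cosh(\pi t)$, $s(s-1)=-(\tfrac14+t^{2})$, $\xi(s)=\Xi(t)$, $e^{x(s-1)}=e^{-x/2}e^{ixt}$ and $ds=i\,dt$. Discarding the odd ($\sin$) part of $e^{ixt}$ and folding the line integral to $(0,\infty)$ collapses the right-hand side, after collecting the elementary constants (the powers of $\pi$ from $\csc^{k}$, the factor $2$ from symmetrizing $e^{ixt}$, and the sign of $s(s-1)$ on the line), to $e^{-x/2}$ times the cosine integral on the left of (2.2); multiplying by $e^{x/2}$ yields (2.2). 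The same computation with $\csc^{3}$ in place of $\csc^{2}$, starting from (1.3), gives (2.3), the extra factor $2$ on its left coming from the $2\pi^{3}$ in (1.3).

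The main obstacle is the rigorous justification of the interchange of the inversion contour with the $t$-integral. This follows from Fubini's theorem once one observes that on $\Re(s)=\tfrac12$ the factor $1/\cosh^{k}(\pi t)$ decays like $e^{-k\pi|t|}$ while $\zeta(\tfrac12-it)$ grows only polynomially, and $\int_{0}^{\infty}t^{-1/2}e^{-\pi a t^{2}}\,dt<\infty$, so that the double integral is absolutely convergent; the same estimates also give the absolute convergence of the final Fourier cosine integrals. A secondary point, needed for the clean form of the identities, is the careful bookkeeping of the constants in the last step, and the fact that the Mellin inversion formulas for $\Lambda_1$ and $\Lambda_2$ hold with abscissa exactly $c=\tfrac12$, which is guaranteed by the continuity and decay of these functions.
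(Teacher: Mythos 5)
Your overall strategy is the same as the paper's (the paper simply invokes (1.2), resp.\ (1.3), with Parseval's theorem for Mellin transforms and the Gaussian $e^{-\pi(yt)^2}$), and your intermediate steps are sound: the Gaussian Mellin integral, the rewriting $\pi^{(s-1)/2}\Gamma\bigl(\tfrac{1-s}{2}\bigr)\zeta(1-s)=\frac{2\xi(s)}{s(s-1)}$, the passage to $s=\tfrac12+it$, and the Fubini justification are all correct.

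The genuine gap is precisely at the step you wave through, ``collecting the elementary constants \ldots yields (2.2)''. If you actually collect them, you do not get (2.2). On $\Re(s)=\tfrac12$ your integrand is $\frac{\pi^{2}\xi(s)}{\sin^{2}(\pi s)\,s(s-1)}e^{x(s-1)}$ with $\sin^{2}(\pi s)=\cosh^{2}(\pi t)$, $s(s-1)=-\bigl(\tfrac14+t^{2}\bigr)$, $ds=i\,dt$, so
\begin{equation*}
\int_{0}^{\infty}\Lambda_1(t)e^{-\pi t^{2}e^{2x}}dt
=-\frac{\pi e^{-x/2}}{2}\int_{-\infty}^{\infty}\frac{\Xi(t)\,e^{ixt}}{\bigl(\tfrac14+t^{2}\bigr)\cosh^{2}(\pi t)}\,dt
=-\pi e^{-x/2}\int_{0}^{\infty}\frac{\Xi(t)}{\tfrac14+t^{2}}\frac{\cos(xt)}{\cosh^{2}(\pi t)}\,dt,
\end{equation*}
an extra factor $-\pi$ compared with (2.2); the identical computation starting from (1.3) produces $-2\pi^{2}$ where (2.3) has $2$. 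The mechanism is visible in the benchmark case you are imitating: in (1.1) the transform is $-\pi\zeta(1-s)/\sin(\pi s)$, and on the critical line each factor $\pi/\sin(\pi s)$ becomes $\pi/\cosh(\pi t)$ --- the $\cosh$ is absorbed into the kernel but the $\pi$ and the sign survive. That is why (2.1) comes out clean, while with the transforms normalized as in Theorem 1.1 the identities cannot hold with the printed constants. (Part of the sign discrepancy may be the paper's own: its closing remark that $\Lambda_1(x)=\int_{0}^{\infty}\frac{\psi(t+1)-\log t}{x+t}\,dt$ would force the Mellin transform of $\Lambda_1$ to be $-\pi^{2}\zeta(1-s)/\sin^{2}(\pi s)$, contradicting (1.2); but the stray powers of $\pi$ are there regardless.) So the final bookkeeping, which you explicitly defer, is exactly the step that needed to be carried out, and once carried out your argument establishes (2.2)--(2.3) only up to these constant factors.
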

\begin{proof} In (2.2) we apply (1.2) with Parseval's theorem for Mellin transforms and the function $e^{-\pi (yt)^2}.$ Similarly in (2.3) we apply (1.3) instead. The remaining details are left for the reader. \end{proof}

1390 Bumps River Rd. \\*
Centerville, MA
02632 \\*
USA \\*
E-mail: alexpatk@hotmail.com
\end{document}